\def\newaliasedtheorem#1[#2]#3{
	\newaliascnt{#1@alt}{#2}
	\newtheorem{#1}[#1@alt]{#3}
	\expandafter\newcommand\csname #1@altname\endcsname{#3}
}
\theoremstyle{plain}
\newtheorem{theorem}{Theorem}[section]
\theoremstyle{definition}
\theoremstyle{remark}
\newcommand{\R}{\mathbb{R}}
\newcommand{\C}{\mathbb{C}}
\let\altphi\phi
\let\phi\varphi
\let\varphi\altphi
\let\altphi\undefined
\newcommand{\average}{{\mathchoice {\kern1ex\vcenter{\hrule height.4pt
width 6pt
depth0pt} \kern-9.7pt} {\kern1ex\vcenter{\hrule height.4pt width 4.3pt
depth0pt}
\kern-7pt} {} {} }}
\address{\textsc{Daniela Di Donato}: 
Dipartimento di Ingegneria Industriale e Scienze Matematiche, Via Brecce Bianche, 12 60131 Ancona, Universit\'a Politecnica delle Marche.}
\email{daniela.didonato@unitn.it}
\title{Ahlfors-David regularity of intrinsically quasi-symmetric sections in metric spaces}
\date{\today}
\author{ Daniela Di Donato }
\subjclass[]{ 
26A16  
51F30 
46B04 
54E35
}
\keywords{Quasi-symmetric graphs, Ahlfors-David regularity, Metric spaces}
\begin{document}

\begin{abstract}
		We introduce a definition of  intrinsically quasi-symmetric sections in metric spaces and we  prove the Ahlfors-David regularity for this class of sections. We follow a recent result by Le Donne and the author where we generalize the notion of intrinsically Lipschitz graphs in the sense of Franchi, Serapioni and Serra Cassano.  We do this by focusing our attention on the graph property instead of the map one.	\end{abstract}

\maketitle

\section{Introduction}
The notion of Lipschitz maps is a key one for rectifiability theory in metric spaces \cite{Federer}. On the other hand, in \cite{AmbrosioKirchheimRect} Ambrosio and Kirchheim prove that the classical Lipschitz definition not work in the context of SubRiemannian Carnot groups \cite{ABB, BLU, CDPT}. Then in a similar way of Euclidean case, Franchi, Serapioni and Serra Cassano \cite{FSSC, FSSC03, MR2032504}  introduce a suitable definition of intrinsic cones which is deep different to Euclidean cones and then they say that a map $\phi$ is intrinsic Lipschitz if  for any $ p\in  \mbox{graph}(\phi) $ it is possible to consider an intrinsic cone $\mathcal{C}$ with vertex on $p$ such that
\begin{equation*}
\mathcal{C} \cap \mbox{graph}(\phi) = \emptyset.
\end{equation*}
In  \cite{DDLD21}, we generalize this concept in general metric spaces. Roughly speaking, in our new approach a section $\psi$ is such that $\mbox{graph}(\phi) =\psi (Y) \subset X$ where $X$ is a metric space and $Y$ is a topological space. We prove some important properties as the Ahlfors regularity, the Ascoli-Arzel\'a Theorem, the Extension theorem for so-called intrinsically Lipschitz sections. Following this idea, the author introduce other two natural definitions: intrinsically H\"older  sections \cite{D22.1} and intrinsically quasi-isometric sections  \cite{D22.2} in metric spaces. Yet, thanks to the seminal paper \cite{C99} (see also \cite{K04, KM16}) it is possible to found suitable sets of this class of sections in order to get the convexity and vector space over $\R$ and $\C.$

Following Pansu in \cite{P21}, the purpose of this note is to give a natural   intrinsically quasi-symmetric notion and then, following again  \cite{DDLD21}, to prove the Ahlfors-David regularity result for this class of sections which includes intrinsically Lipschitz sections. More precisely, the main result of this paper is Theorem \ref{thm2}.

\subsection{Quasi-symmetric sections}
 Before to give a suitable definition of quasi-symmetric sections, we recall the classical notion of quasi-conformal maps \cite{10.1007/BF02392747, A06, H01, IM01, TV}.  Let $X$ and $Y$ be two metric spaces and let $f:Y\to X$ be an homeomorphism (i.e., $f$ and its inverse are continuous maps). For $\bar y \in Y, r>0$ we define
 \begin{equation*}
\begin{aligned}
L_f (\bar y, r) &:= \sup  \{ d(f(\bar y), f(y))\, :\, d(\bar y, y) \leq r \},\\
\ell_f (\bar y, r) &:= \inf  \{ d(f(\bar y), f(y))\, :\, d(\bar y, y) \geq r \},\\
\end{aligned}
\end{equation*}
and the ratio $H_f (\bar y , r ) := L_f(\bar y , r) / \ell _f(\bar y , r)$ which measures the eccentricity of the image of the ball $B(\bar y , r)$ under $f.$ We say that $f$ is $H$-quasiconformal if
\begin{equation}\label{equationHF}
\limsup _{r\to 0} H_f (\bar y, r) \leq H, \quad \forall \bar y \in Y.
\end{equation}
 A good point of our research is that $Y$ is just a topological space because, in many cases, we just consider the metric on $X.$ On the other hand, we can not do a automatically choice of $\ell _f$ and the reason will be clear after to present our setting. We have a metric space $X$, a topological space $Y$, and a 
quotient map $\pi:X\to Y$, meaning
continuous, open, and surjective.
The standard example for us is when $X$ is a metric Lie group $G$ (meaning that the Lie group $G$ is equipped with a left-invariant distance that induces the manifold topology), for example a subRiemannian Carnot group, 
and $Y$ is the space of left cosets $G/H$, where 
$H<G$ is a  closed subgroup and $\pi:G\to G/H$ is the projection modulo $H$, $g\mapsto gH$.

 In \cite{DDLD21}, we consider a section $\phi :Y \to X$ of  $\pi:X \to Y$ (i.e., $\pi \circ \phi = id_Y$) such that $\pi$ produces a foliation for $X,$ i.e., $X= \coprod \pi ^{-1} (y)$ and the Lipschitz property of $\phi$ consists to ask that the distance between two points $\phi (y_1), \phi (y_2)$ is not comparable with the distance between $y_1$ and $y_2$ but between $\phi (y_1)$ and the fiber of $y_2$.  Following this idea, the corresponding notion given in \eqref{equationHF} becomes
\begin{equation}
\limsup _{r\to 0} H_ \phi (\bar y, r) \leq H, \quad \forall \bar y \in Y,
\end{equation}
where  \begin{equation*}
\begin{aligned}
L_\phi (\bar y, r) &:= \sup  \{ d( \phi (\bar y), \phi(y))\, :\, d(\phi (\bar y), \pi ^{-1} (y)) \leq r \},\\
\ell_\phi (\bar y, r) &:= \inf  \{ d(\phi (\bar y), \phi (y))\, :\,  d(\phi (\bar y), \pi ^{-1} (y)) \geq r \},\\
\end{aligned}
\end{equation*}
and the intrinsic ratio $H_f (\bar y , r ) := L_\phi(\bar y , r) / \ell_\phi(\bar y , r).$ 

Now we can understand why we can not choice $\ell_\phi.$ Indeed, in this case, $$r\leq d(\phi (y_1), \pi ^{-1} (y_2)) \leq d(\phi (y_1), \phi (y_2))$$ and so 
  \begin{equation*}
\begin{aligned}
\ell_f (\bar y, r) =r, \quad \forall \bar y \in Y.\\
\end{aligned}
\end{equation*}

Because of this, we follow Pansu in \cite{P21}, and we give the following non-trivial definition.

\begin{defi}\label{def_ILS}We say that a map $\phi:Y\to X$ is an {\em intrinsically $\eta$-quasi-symmetric section of $\pi$}, if it is a section, i.e.,
\begin{equation}
\pi \circ \phi =\mbox{id}_Y,
\end{equation}
and if there exists an homeomorphism $\eta :(0,\infty) \to (0,\infty)$ (i.e., $\eta$ and its inverse are continuous maps) measuring the intrinsic quasi-symmetry of $\phi .$ This means that  for any $y_1, y_2,y_3\in Y$ distinct points of $Y$ which not belong to the same fiber, it holds
\begin{equation}\label{equation10aprile}
\frac{d(\phi (y_1), \phi (y_2))}{d(\phi (y_1), \phi (y_3))}\leq \eta\left( \frac{d(\phi (y_2), \pi^{-1} (y_1))}{d(\phi (y_3), \pi^{-1} (y_1))} \right).
\end{equation}
Here $d$ denotes the distance on $X$, and, as usual, for a subset $A\subset X$ and a point $x\in X$, we have
$d(x,A):=\inf\{d(x,a):a\in A\}$.
\end{defi}

Equivalently to \eqref{equation10aprile}, we are requesting that 
\begin{equation}
\frac{d(x_1, x_2)}{d(x_1, x_3)}\leq \eta\left( \frac{d( x_2, \pi^{-1} (\pi (x_1)))}{d( x_3, \pi^{-1} (\pi(x_1)))} \right), \quad \mbox{for all } x_1,x_2,x_3 \in \phi (Y),
\end{equation}
where we ask that $x_2, x_3 \notin \pi^{-1} (\pi(x_1)).$

 We give some examples of this class of maps.
\begin{exa}[Intrinsically Lipschitz section of $\pi$]\label{intrinsicLipschitz1} Following \cite{DDLD21}, we say that a map $\phi:Y\to X$ is an {\em intrinsically Lipschitz section of $\pi$ with constant $L$},  with $L\in[1,\infty)$, if it is a section and 
\begin{equation*}
d(\phi (y_1), \phi (y_2)) \leq L d(\phi (y_1), \pi ^{-1} (y_2)), \quad \mbox{for all } y_1, y_2 \in Y.
\end{equation*}
Here, $\eta (x) = Lx$ for every $x\in (0,\infty)$. Indeed,
     \begin{equation*}
\begin{aligned}
\frac{d(\phi (y_1), \phi (y_2))}{d(\phi (y_1), \phi (y_3))} & =  \frac{d(\phi (y_1), \phi (y_2))}{d(\phi (y_2), \pi^{-1} (y_1))} \frac{d(\phi (y_3), \pi^{-1} (y_1))}{d(\phi (y_1), \phi (y_3))} \frac{d(\phi (y_2), \pi^{-1} (y_1))}{d(\phi (y_3), \pi^{-1} (y_1))}\\
& \leq L \frac{d(\phi (y_2), \pi^{-1} (y_1))}{d(\phi (y_3), \pi^{-1} (y_1))},
\end{aligned}
\end{equation*}
where in the last inequality we used the simply fact $\phi (y_1) \in \pi^{-1} (y_1)$ and so $\frac{d(\phi (y_3), \pi^{-1} (y_1))}{d(\phi (y_1), \phi (y_3))} \leq 1.$
\end{exa} 

\begin{exa}[BiLipschitz embedding]  BiLipschitz embedding are examples of intrinsically $\eta$-quasi-symmetric sections of $\pi.$ This follows because 
in the case  $\pi$ is a Lipschitz quotient or submetry \cite{MR1736929, Berestovski}, being intrinsically Lipschitz  is equivalent to biLipschitz embedding, (see Proposition 2.4 in \cite{DDLD21}).
\end{exa}  

\begin{exa}[Intrinsically H\"older section of $\pi$ (in the discrete case)]\label{intrinsicHolder2} 
Let $X$ be a metric space with the additional hypothesis that there is $\varepsilon >0$ such that $d(\phi (y_1), \phi (y_2)) \geq \varepsilon >0$ for any $y_1, y_2 \in Y.$ 
 Following \cite{D22.1}, we say that a map $\phi:Y\to X$ is an {\em intrinsically $(L, \alpha)$-H\"older section of $\pi$},  with $L\in[1,\infty)$ and $\alpha \in (0,1)$, if it is a section and 
\begin{equation*}
d(\phi (y_1), \phi (y_2)) \leq L d(\phi (y_1), \pi ^{-1} (y_2))^\alpha, \quad \mbox{for all } y_1, y_2 \in Y.
\end{equation*}
Here, $\eta (x) = L \varepsilon ^{\alpha -1}x^\alpha $ for any $x\in (0,\infty)$. Indeed,
  \begin{equation*}
\begin{aligned}
\frac{d(\phi (y_1), \phi (y_2))}{d(\phi (y_1), \phi (y_3))} & =  \frac{d(\phi (y_1), \phi (y_2))}{d(\phi (y_2), \pi^{-1} (y_1)) ^\alpha } \frac{d(\phi (y_3), \pi^{-1} (y_1))^\alpha}{d(\phi (y_1), \phi (y_3))} \frac{d(\phi (y_2), \pi^{-1} (y_1))^\alpha}{d(\phi (y_3), \pi^{-1} (y_1))^\alpha}\\
& \leq L  \varepsilon^{\alpha -1} \frac{d(\phi (y_2), \pi^{-1} (y_1)) ^\alpha}{d(\phi (y_3), \pi^{-1} (y_1))^\alpha},
\end{aligned}
\end{equation*}
as desired.
\end{exa}

%
%
%

\section{Ahlfors-David regularity}\label{theoremAhlforsNEW} 
Regarding Ahlfors-David  regularity in metric setting, the reader can see  \cite{DDLD21} for intrinsically Lipschitz sections;  \cite{D22.1} for H\"older sections; \cite{D22.2} for intrinsically quasi-isometric sections.

The main result of this paper is the following.
   \begin{theorem}[Ahlfors-David regularity]\label{thm2}
   Let $\pi :X \to Y$ be a quotient map between a metric space $X$  and a topological space $Y$ such that there is a measure $\mu$ on $Y$ such that for every $r_0>0$ and every $x,x' \in X$ with $\pi (x)=\pi(x')$  there is $C>0$ such that
      \begin{equation}\label{Ahlfors27ott.112}
\mu (\pi (B(x,r))) \leq C \mu (\pi (B(x',r))),
\end{equation}
for  every $r\in (0,r_0).$
   
We also assume that $\phi :Y \to X$ is an intrinsically $\eta$-quasi-symmetric section of $\pi$ such that 
\begin{enumerate}
\item $\phi (Y)$ is $Q$-Ahlfors-David regular with respect to  the measure $\phi_* \mu$, with $Q\in (0,\infty)$
\item it holds
\begin{equation}\label{equationeta}
\ell _\eta := \sup _{\substack{g,q \in \phi (Y) \\ \pi (g)=\pi(q)} }   \eta \left( \frac{d(g, \pi^{-1} (\bar y))}{d(q, \pi^{-1} (\bar y))} \right) <\infty ,
\end{equation}
for any $\bar y \in Y$ such that $g, q \notin \pi ^{-1}(\bar y)$
\end{enumerate}

  Then, for every intrinsically $\eta$-quasi-symmetric section $\psi :Y \to X,$  the set $ \psi (Y) $ is $Q$-Ahlfors-David regular with respect to  the measure $\psi_* \mu$, with $Q\in (0,\infty)$.

    \end{theorem}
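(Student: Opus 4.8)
The plan is to deduce the regularity of $\psi_*\mu$ from that of $\phi_*\mu$ by passing through the fibres of $\pi$, using \eqref{Ahlfors27ott.112} to exchange the two sections and the quasi-symmetry to compare, for each fixed section, the ``section ball'' with the ``fibre ball''. For a section $\sigma$ of $\pi$ and $\bar y\in Y$, $r>0$, set
\[
N_\sigma(\bar y,r):=\sigma_*\mu\big(B(\sigma(\bar y),r)\big)=\mu\big(\{y\in Y:\ d(\sigma(\bar y),\sigma(y))<r\}\big),
\]
\[
M_\sigma(\bar y,r):=\mu\big(\pi(B(\sigma(\bar y),r))\big)=\mu\big(\{y\in Y:\ d(\sigma(\bar y),\pi^{-1}(y))<r\}\big).
\]
Proving that $\psi(Y)$ is $Q$-Ahlfors--David regular amounts to showing that $N_\psi(\bar y,r)$ is comparable to $r^Q$, uniformly in $\bar y$ and $r$. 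Since $\sigma(y)\in\pi^{-1}(y)$ gives $d(\sigma(\bar y),\pi^{-1}(y))\le d(\sigma(\bar y),\sigma(y))$, one has a cost-free inclusion and hence $N_\sigma(\bar y,r)\le M_\sigma(\bar y,r)$ for every section $\sigma$.

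First I would record how \eqref{Ahlfors27ott.112} lets one change section. As $\pi(\phi(\bar y))=\bar y=\pi(\psi(\bar y))$, the points $\phi(\bar y)$ and $\psi(\bar y)$ lie in the common fibre $\pi^{-1}(\bar y)$; applying \eqref{Ahlfors27ott.112} with $x=\psi(\bar y)$, $x'=\phi(\bar y)$ (and then with the roles reversed) yields a constant $C$ so that
\[
M_\psi(\bar y,r)\le C\,M_\phi(\bar y,r)\quad\text{and}\quad M_\phi(\bar y,r)\le C\,M_\psi(\bar y,r)
\]
for all small $r$. Thus the fibre-ball measures of the two sections are comparable at every scale, which is the mechanism transporting regularity from $\phi$ to $\psi$.

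The crux is a quasi-symmetry lemma, valid for every $\eta$-quasi-symmetric section $\sigma$: there is a constant $a=a(\eta,\ell_\eta)\in(0,1]$ such that $M_\sigma(\bar y,a r)\le N_\sigma(\bar y,r)$ for all $\bar y,r$. Together with the trivial bound this makes $N_\sigma(\bar y,r)$ and $M_\sigma(\bar y,r)$ comparable, with constants depending only on $\eta$ and on $\ell_\eta$ from \eqref{equationeta}. To prove it I would invoke \eqref{equation10aprile} with the point $y$ taken as the distinguished (first) argument, so that the fibre distance $d(\sigma(\bar y),\pi^{-1}(y))$ appears inside $\eta$, and then bound the resulting values of $\eta$ uniformly by $\ell_\eta$; the finiteness in \eqref{equationeta} is exactly what keeps the eccentricity of the section from degenerating and forces the two ball systems to be comparable with constants independent of the base point and the scale. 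One should argue at the level of measures rather than through a pointwise set inclusion: a clean inclusion $\pi(B(\sigma(\bar y),ar))\subseteq\{y:d(\sigma(\bar y),\sigma(y))<r\}$ would amount to an intrinsic Lipschitz estimate (cf.\ Example \ref{intrinsicLipschitz1}), which quasi-symmetry does not supply; the measure comparison, obtained by covering the fibre ball by section balls, is genuinely weaker and is what the hypotheses afford.

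Granting the lemma, the theorem follows by chaining the estimates (here and below $\asymp$ denotes comparability up to multiplicative constants independent of $\bar y$ and $r$). Since $\phi(Y)$ is $Q$-regular, $N_\phi(\bar y,r)\asymp r^Q$; combined with $N_\phi\le M_\phi$ and the lemma applied at radius $r/a$ this gives $M_\phi(\bar y,r)\asymp r^Q$ at every scale. Transferring through \eqref{Ahlfors27ott.112} yields $M_\psi(\bar y,r)\asymp r^Q$. Feeding this back into the lemma for $\sigma=\psi$,
\[
N_\psi(\bar y,r)\le M_\psi(\bar y,r)\asymp r^Q,\qquad N_\psi(\bar y,r)\ge M_\psi(\bar y,ar)\asymp (a r)^Q\asymp r^Q,
\]
so $\psi_*\mu(B(\psi(\bar y),r))=N_\psi(\bar y,r)\asymp r^Q$, i.e.\ $\psi(Y)$ is $Q$-Ahlfors--David regular. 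I expect the only real difficulty to be the quasi-symmetry lemma, and within it the scale-uniformity: the trivial comparison runs one way, while the reverse comparison of a section ball with its fibre ball is the point at which condition (2) must be used in full, simultaneously for every section and at every radius; the remaining steps are the bookkeeping of the three comparisons above.
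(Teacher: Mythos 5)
Your overall architecture coincides with the paper's: you compare, for each section $\sigma$, the ``section-ball'' measure $N_\sigma$ with the ``fibre-ball'' measure $M_\sigma$, swap sections at the level of $M$ using \eqref{Ahlfors27ott.112} (legitimate, since $\phi(y)$ and $\psi(y)$ share the fibre $\pi^{-1}(y)$), and chain the three comparisons; this bookkeeping is exactly the paper's proof of Theorem \ref{thm2}. The problem is the one step you yourself single out as the crux, namely $M_\sigma(\bar y, ar)\le N_\sigma(\bar y,r)$: you leave it unproved, and the strategy you sketch for it rests on a misdiagnosis. You assert that a pointwise inclusion $\pi(B(\sigma(\bar y),ar))\subseteq\{y:\ d(\sigma(\bar y),\sigma(y))<r\}$ ``would amount to an intrinsic Lipschitz estimate, which quasi-symmetry does not supply'', and you propose instead an unspecified measure-level covering argument. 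But hypothesis (2) of the theorem is there precisely to supply that inclusion, and the paper proves it as Lemma \ref{pseudodistance}: for $p=\phi(\bar y)$ and $q\in B(p,r)$, set $g:=\phi(\pi(q))$; then $g$ and $q$ lie in the same fibre, and applying \eqref{equation10aprile} to the triple $(p,g,q)$ --- with $\bar y$, not the varying point $y$, as the distinguished first argument --- gives
\[
d(p,g)\;\le\;\eta\!\left(\frac{d(g,\pi^{-1}(\bar y))}{d(q,\pi^{-1}(\bar y))}\right)\, d(p,q)\;\le\;\ell_\eta\, r ,
\]
which is exactly $\pi(B(p,r))\subset\pi(B(p,\ell_\eta r)\cap\phi(Y))$, i.e.\ your lemma with $a=1/\ell_\eta$. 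So quasi-symmetry together with \eqref{equationeta} does yield the intrinsic-Lipschitz-type bound you declared out of reach; condition (2) is not a technical normalization but the exact engine of the set inclusion.

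By contrast, your proposed replacement --- ``covering the fibre ball by section balls'' --- is not an argument in this generality: no covering lemma is available (there is no doubling or Vitali-type hypothesis on $\mu$ or on $X$), and your description of how \eqref{equation10aprile} would be invoked (with $y$ as the distinguished argument, so that $d(\sigma(\bar y),\pi^{-1}(y))$ sits inside $\eta$) does not match the structure of \eqref{equationeta}, whose supremum runs over pairs $g,q$ of a common fibre measured against the fixed fibre $\pi^{-1}(\bar y)$. As written, the crux of your proof is therefore a genuine gap; it is filled precisely by the set inclusion you rejected, after which your chain of comparisons goes through verbatim and reproduces the paper's constants $c_1C^{-1}\ell_\eta^{-Q}$ and $c_2C\ell_\eta^{Q}$.
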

 Namely, in Theorem~\ref{thm2}  $Q$-Ahlfors-David regularity means that    the measure $\phi_* \mu$ is such that  for each point $x\in \phi (Y)$ there exist $r_0>0$ and $C>0$ so that
  \begin{equation}\label{Ahlfors_IN_N}
 C^{-1}r^Q\leq  \phi_* \mu \big( B(x,r) \cap \phi (Y)\big) \leq C r^Q, \qquad \text{ for all }r\in (0,r_0).
\end{equation}


We need to a preliminary result.
 \begin{lem}\label{pseudodistance}
 Let $X$ be a metric  space, $Y$   a topological space, and $\pi:X\to Y$ a quotient map. If $\phi :Y \to X$ is an intrinsically $\eta$-quasi-symmetric section of $\pi$ such that  \eqref{equationeta} holds, then
		\begin{equation}\label{inclusionepalle}
\pi \left(B\left(p, r \right) \right) \subset \pi ( B(p, \ell_\eta r ) \cap \phi (Y)) \subset \pi (B(p, \ell _\eta r)), \quad \forall p\in \phi (Y), \forall r>0.
\end{equation}
  \end{lem}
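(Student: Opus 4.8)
The plan is to prove the two inclusions in \eqref{inclusionepalle} separately; the right-hand one is immediate, while the left-hand one carries all the content. Fix $p\in\phi(Y)$ and $r>0$, and write $p=\phi(\bar y)$ with $\bar y=\pi(p)$. The inclusion $\pi(B(p,\ell_\eta r)\cap\phi(Y))\subset\pi(B(p,\ell_\eta r))$ is trivial, since $B(p,\ell_\eta r)\cap\phi(Y)\subset B(p,\ell_\eta r)$ and applying $\pi$ preserves set inclusions. For the left-hand inclusion I would argue as follows. Let $y\in\pi(B(p,r))$, so that there is $x\in B(p,r)$ with $\pi(x)=y$. Since $x\in\pi^{-1}(y)$ and $d(p,x)<r$, this gives $d(p,\pi^{-1}(y))\le d(p,x)<r$. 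Because $\phi$ is a section, the only point of $\phi(Y)$ lying over $y$ is $\phi(y)$, and $\pi(\phi(y))=y$; hence it suffices to show $\phi(y)\in B(p,\ell_\eta r)$, for then $y=\pi(\phi(y))\in\pi(B(p,\ell_\eta r)\cap\phi(Y))$. In other words, the whole statement reduces to the comparison estimate
\begin{equation*}
d(p,\phi(y))\le \ell_\eta\, d(p,\pi^{-1}(y)),
\end{equation*}
which, combined with $d(p,\pi^{-1}(y))<r$, yields $d(p,\phi(y))<\ell_\eta r$ as required.

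To establish this estimate I would return to the quasi-symmetry inequality \eqref{equation10aprile}, applied to a triple built from $\phi(y)$, $p=\phi(\bar y)$ and a suitable third point of $\phi(Y)$, chosen so that the fiber over the relevant point appears inside the modulus $\eta$. The finiteness hypothesis \eqref{equationeta} is then exactly the tool that bounds the resulting $\eta$-factor by the single constant $\ell_\eta$, uniformly over the admissible configurations. This is the precise analogue of the intrinsically Lipschitz situation of Example~\ref{intrinsicLipschitz1}, where $\eta(x)=Lx$ and the comparison estimate holds with $\ell_\eta$ playing the role of $L$; here the role of the linear bound is taken over by the supremum defining $\ell_\eta$.

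The hard part will be this comparison estimate, and specifically the passage from the three-point inequality \eqref{equation10aprile} to a genuine two-point bound involving the fiber distance $d(p,\pi^{-1}(y))$. The obstacle is that $d(p,\pi^{-1}(y))=\inf_{z\in\pi^{-1}(y)}d(p,z)$ is an infimum over the fiber that is in general not attained, so one cannot simply insert a fiber point into \eqref{equation10aprile}. I would circumvent this by selecting, for each $\varepsilon>0$, a point $z\in\pi^{-1}(y)$ with $d(p,z)\le d(p,\pi^{-1}(y))+\varepsilon$, running the quasi-symmetry bound controlled by $\ell_\eta$ with this $z$ in place of the exact nearest point, and then letting $\varepsilon\to0$ while invoking the continuity of the homeomorphism $\eta$. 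Once the comparison estimate is secured in this way, the membership $\phi(y)\in B(p,\ell_\eta r)$ follows, the left-hand inclusion is proved, and \eqref{inclusionepalle} is complete.
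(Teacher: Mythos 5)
Your handling of the second inclusion and your reduction of the first one are both correct and coincide with the paper's: it does suffice to show $\phi(y)\in B(p,\ell_\eta r)$ for each $y=\pi(x)$ with $x\in B(p,r)$, since $\phi(y)\in\phi(Y)$ and $\pi(\phi(y))=y$. But you stop exactly where the content of the lemma begins. The passage from the three-point inequality \eqref{equation10aprile} to the two-point bound $d(p,\phi(y))\le\ell_\eta\,d(p,x)$ \emph{is} the lemma, and you leave it as a plan (``a suitable third point of $\phi(Y)$, chosen so that the fiber over the relevant point appears inside the modulus $\eta$'') without ever naming that third point. Moreover the plan is internally inconsistent: you insist the third point of the triple lie in $\phi(Y)$, yet the approximating points $z\in\pi^{-1}(y)$ you then propose to feed into the inequality are in general not in $\phi(Y)$, and \eqref{equation10aprile} as stated applies only to triples of points on the graph. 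So no concrete configuration is ever exhibited to which quasi-symmetry applies, and the hypothesis \eqref{equationeta} --- whose role you correctly sense --- is never actually invoked on a ratio of the form it controls, namely one whose numerator and denominator are distances from two points of a \emph{common fiber} to $\pi^{-1}(\bar y)$.

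The paper closes this gap by a specific choice of triple: with $y_0:=\pi(p)$ (your $\bar y$), it takes $p=\phi(y_0)$, the given point $q\in B(p,r)$ itself, and $g:=\phi(\pi(q))$, so that $\pi(g)=\pi(q)$ and the pair $(g,q)$ sits in one fiber --- precisely the configuration that \eqref{equationeta} bounds. It then writes
\begin{equation*}
d(p,g)=\frac{d(p,g)}{d(p,q)}\,d(p,q)\le \eta\left(\frac{d(g,\pi^{-1}(y_0))}{d(q,\pi^{-1}(y_0))}\right)d(p,q)\le \ell_\eta\, r,
\end{equation*}
which is \eqref{equation2.6}. Note also that your $\varepsilon$-approximation of the infimum $d(p,\pi^{-1}(y))$ addresses a non-problem: the lemma does not need the sharper estimate $d(p,\phi(y))\le\ell_\eta\,d(p,\pi^{-1}(y))$, because membership $y\in\pi(B(p,r))$ already hands you a concrete fiber point $x=q$ with $d(p,q)<r$, and $d(p,g)\le\ell_\eta\,d(p,q)<\ell_\eta r$ suffices. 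The genuine subtlety --- how a fiber point that is not on the graph enters the quasi-symmetry inequality together with its ``shadow'' $\phi(\pi(q))$ on the graph --- is the step your sketch omits and must be made explicit.
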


  \begin{proof}
 Regarding the first inclusion, fix $p=\phi (y)\in \phi (Y), r>0$ and $q\in B(p, r )$ with $q\ne p .$ 


   We need to show that $\pi (q) \in \pi (\phi (Y) \cap B(p,\ell _\eta r )).$ Actually, it is enough to prove that 
\begin{equation}\label{equation2.6}
\phi (\pi (q)) \in B(p,\ell_\eta r),
\end{equation}
 because if we take $g:= \phi (\pi (q)),$ then $g\in \phi (Y)$ and 
 \begin{equation*}
 \pi (g)= \pi (\phi (\pi (q))) =\pi (q) \in \pi (\phi (Y) \cap B(p,\ell _\eta r)).
\end{equation*}
 
 Hence using the intrinsic $\eta$-quasi-symmetric property of $\phi$ and  \eqref{equationeta},  we have that for any $p=\phi (y), q, g \in \phi (Y)$ with $g= \phi (\pi (q)),$
 \begin{equation}
d(p,g) =  \frac{d(p,g)}{d(p,q)} d(p,q) \leq \eta \left( \frac{d(g, \pi ^{-1}(y))}{d(q,\pi ^{-1}(y))} \right) r \leq \ell _\eta r,
\end{equation}
i.e., \eqref{equation2.6} holds, as desired.  Finally, the second inclusion in \eqref{inclusionepalle} follows immediately noting that $\pi (\phi (Y))=Y$ because $\phi$ is a section and the proof is complete. 
  \end{proof}

  Now we are able to prove Theorem $\ref{thm2}$.
  \begin{proof}[Proof of Theorem $\ref{thm2}$]
   Let $\phi$ and $\psi$ intrinsically $\eta$-quasi-symmetric sections. 
  Fix $y\in Y.$  By Ahlfors regularity of $\phi (y),$ we know that there are $c_1,c_2, r_0>0$ such that 
     \begin{equation}\label{AhlforsNEW0}
{c_1} r^Q \leq  \phi _* \mu \big( B(\phi (y),r) \cap \phi (Y)\big)  \leq c_2 r^Q,
\end{equation}
for all $ r \in (0,r_0).$ We would like to show that there is $c_3,c_4 >0$ such that
   \begin{equation}\label{AhlforsNEW127}
 c_4 r^Q \leq \psi _* \mu \big( B(\psi(y),r) \cap \psi (Y)\big)  \leq c_4 r^Q,
\end{equation}
  for every  $ r \in (0,r_0).$ 
  We begin noticing that, by symmetry and  \eqref{Ahlfors27ott.112}
   \begin{equation}\label{Ahlfors27ott}
C^{-1} \mu (\pi (B(\psi(y),r))) \leq \mu (\pi (B(\phi(y),r))) \leq C \mu (\pi (B(\psi(y),r))).
\end{equation}
Moreover, 
\begin{equation}\label{serveperAhlfors27}
  \psi _* \mu \big( B(\psi(y),r) \cap \psi (Y)\big) =  \mu ( \psi^{-1} \big( B(\psi(y),r) \cap \psi (Y)\big) ) = \mu ( \pi \big( B(\psi(y),r) \cap \psi (Y)\big) ), 
\end{equation}
  and, consequently, 
\begin{equation*}\label{ugualecarnot} 
\begin{aligned} \psi _* \mu \big( B(\psi(y),r) \cap \psi (Y)\big) & \geq  \mu (\pi (B(\psi (y), r / \ell _\eta ))) \geq  C^{-1}   \mu (\pi (B(\phi (y), r / \ell _\eta ))) \\
& \geq  C^{-1}   \mu (\pi (B(\phi (y), r / \ell _\eta ) \cap \phi (Y)))\\
& =    C^{-1} \phi _* \mu \big( B(\phi(y), r / \ell _\eta ) \cap \phi (Y)\big) \\ & \geq   c_1C^{-1}  \ell _\eta^{-Q}  r^Q,
\end{aligned}
\end{equation*}
where in the first inequality we used  the first inclusion of \eqref{inclusionepalle}  with $\psi$ in place of $\phi$, and in the second one we used \eqref{Ahlfors27ott}. In the  third  inequality we used the second inclusion of \eqref{inclusionepalle} and in the fourth one we used  \eqref{serveperAhlfors27}  with $\phi$ in place of $\psi.$ Moreover, in a similar way we have that 
\begin{equation*}
\begin{aligned} 
\psi _* \mu \big( B(\psi(y),r) \cap \psi (Y)\big) & \leq  \mu (\pi (B(\psi (y),r))) \leq C  \mu (\pi (B(\phi (y),r)))\\
& \leq C  \mu (\pi (B(\phi (y), \ell _\eta r )) \cap \phi (Y)))
\\& = C  \phi _* \mu \big( B(\phi(y), \ell _\eta r  ) \cap \phi (Y)\big) \\
& \leq  {c_2} C  \ell _\eta^{Q} r^Q.
\end{aligned}
\end{equation*}
Hence, putting together the last two inequalities we have that \eqref{AhlforsNEW127} holds with ${c_3} =  c_1C^{-1}  \ell _\eta^{-Q}$ and $c_4 =  {c_2} C  \ell _\eta^{Q}.$ 
  \end{proof}

\subsection{Quasi-conformal sections}
In this section we present the definition of quasi-conformal sections. Regarding the classical quasi-conformal and quasi-symmetric maps the reader can see \cite{10.1007/BF02392747, A06, H01, IM01}. 

\begin{defi}\label{def_conformal}We say that a map $\phi:Y\to X$ is an {\em intrinsically $\eta$-quasi-conformal section of $\pi$}, if it is a section, i.e.,
\begin{equation}
\pi \circ \phi =\mbox{id}_Y,
\end{equation}
and there exist $H \geq 0$ and an homeomorphism $\eta :(0,\infty) \to (0,\infty)$ (i.e., $\eta$ such that for any $y_1, y_2,y_3\in Y$ distinct points of $Y$ which not belong to the same fiber, it holds
\begin{equation}\label{equation10aprile}
\frac{d(\phi (y_1), \phi (y_2))}{d(\phi (y_1), \phi (y_3))}\leq \limsup_{\substack{x,x' \in \phi (Y),\, \pi (x)=\pi(x') \\ x\to x'}  } \, \eta\left( \frac{d(x, \pi^{-1} (y_1))}{d(x', \pi^{-1} (y_1))} \right) < H.
\end{equation}
Here $d$ denotes the distance on $X$, and, as usual, for a subset $A\subset X$ and a point $x\in X$, we have
$d(x,A):=\inf\{d(x,a):a\in A\}$.
\end{defi}



Finally, this class of section satisfies the hypothesis \eqref{equationeta} of Theorem \ref{thm2}. Hence, we can conclude with the following corollary.

  \begin{theorem}[Ahlfors-David regularity]
   Let $\pi :X \to Y$ be a quotient map between a metric space $X$  and a topological space $Y$ such that there is a measure $\mu$ on $Y$ such that for every $r_0>0$ and every $x,x' \in X$ with $\pi (x)=\pi(x')$  there is $C>0$ such that
      \begin{equation}\label{Ahlfors27ott.112}
\mu (\pi (B(x,r))) \leq C \mu (\pi (B(x',r))),
\end{equation}
for  every $r\in (0,r_0).$
   
We also assume that $\phi :Y \to X$ is an intrinsically $(\eta , H)$-quasi-conformal section of $\pi$ such that  $\phi (Y)$ is $Q$-Ahlfors-David regular with respect to  the measure $\phi_* \mu$, with $Q\in (0,\infty)$ 
for some fixed $\bar y,\bar y_1 \in Y.$

  Then, for every intrinsically $(\eta , H)$-quasi-conformal section $\psi :Y \to X,$  the set $ \psi (Y) $ is $Q$-Ahlfors-David regular with respect to  the measure $\psi_* \mu$, with $Q\in (0,\infty)$.

    \end{theorem}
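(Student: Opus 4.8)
The plan is to obtain this statement as a direct corollary of Theorem~\ref{thm2}, exactly as the sentence preceding the statement suggests: since Ahlfors-David regularity has already been proven for intrinsically $\eta$-quasi-symmetric sections satisfying \eqref{equationeta}, the entire task reduces to checking that every intrinsically $(\eta,H)$-quasi-conformal section falls inside that class. Concretely, I would establish two facts and then invoke Theorem~\ref{thm2} unchanged: (i) both $\phi$ and the competitor $\psi$ of Definition~\ref{def_conformal} are intrinsically $\eta$-quasi-symmetric sections, and (ii) the finiteness condition \eqref{equationeta} holds.

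For (i) I would read off the quasi-symmetric inequality \eqref{equation10aprile} from Definition~\ref{def_conformal}: for distinct $y_1,y_2,y_3$ in distinct fibres, the defining bound controls the eccentricity ratio $d(\phi(y_1),\phi(y_2))/d(\phi(y_1),\phi(y_3))$ by the $\limsup$ over same-fibre pairs $x,x'\in\phi(Y)$ of $\eta(d(x,\pi^{-1}(y_1))/d(x',\pi^{-1}(y_1)))$, which specializes to the pair $x=\phi(y_2)$, $x'=\phi(y_3)$ and so yields precisely \eqref{equation10aprile}. For (ii) the point is that the supremum
\[
\ell_\eta=\sup_{\substack{g,q\in\phi(Y)\\ \pi(g)=\pi(q)}}\eta\!\left(\frac{d(g,\pi^{-1}(\bar y))}{d(q,\pi^{-1}(\bar y))}\right)
\]
is, up to replacing the $\limsup$ in Definition~\ref{def_conformal} by a supremum, the very eccentricity functional that the quasi-conformal hypothesis forces to lie strictly below $H$; this gives $\ell_\eta\le H<\infty$, i.e.\ \eqref{equationeta}. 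With (i), (ii), and the assumed $Q$-Ahlfors-David regularity of $\phi(Y)$ with respect to $\phi_*\mu$ all in hand, Theorem~\ref{thm2} applies verbatim and delivers the $Q$-Ahlfors-David regularity of $\psi(Y)$ with respect to $\psi_*\mu$.

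The step I expect to be the main obstacle is the passage in (ii) from the \emph{local} $\limsup$ eccentricity of Definition~\ref{def_conformal}, taken as $x\to x'$ inside a common fibre, to the \emph{global} supremum $\ell_\eta$ required by \eqref{equationeta}: a priori the quasi-conformal bound only constrains the infinitesimal eccentricity, while $\ell_\eta$ asks for a uniform bound over \emph{all} same-fibre pairs $g,q$. I would bridge this gap by exploiting that Ahlfors-David regularity is a purely local assertion, tested only on balls $B(\cdot,r)$ with $r\in(0,r_0)$: in the application of Lemma~\ref{pseudodistance} one only needs the eccentricity to be controlled on small balls, where the $\limsup<H$ bound supplies the finite constant. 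Once this localization is made precise, the chain of inclusions from \eqref{inclusionepalle} and the measure comparison \eqref{Ahlfors27ott} carry over without modification, and the two-sided estimate \eqref{AhlforsNEW127} follows as in the proof of Theorem~\ref{thm2}.
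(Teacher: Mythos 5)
Your overall route---reduce the statement to Theorem \ref{thm2} by checking that intrinsically $(\eta,H)$-quasi-conformal sections are intrinsically $\eta$-quasi-symmetric and satisfy \eqref{equationeta}---is exactly the paper's intended one; the paper itself offers nothing beyond the one-sentence assertion that ``this class of sections satisfies the hypothesis \eqref{equationeta}'' before invoking Theorem \ref{thm2}. So the strategy is right, but both of your verification steps contain a genuine gap. In step (i) you claim the quasi-conformal bound ``specializes to the pair $x=\phi(y_2)$, $x'=\phi(y_3)$.'' It does not: the $\limsup$ in Definition \ref{def_conformal} ranges only over pairs $x,x'\in\phi(Y)$ lying in the \emph{same} fibre ($\pi(x)=\pi(x')$) and with $x\to x'$, whereas $\phi(y_2)$ and $\phi(y_3)$ lie in distinct fibres, so they are not admissible in that $\limsup$. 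What Definition \ref{def_conformal} actually delivers is the uniform bound $d(\phi(y_1),\phi(y_2))/d(\phi(y_1),\phi(y_3))<H$, which is not the quasi-symmetry inequality with the same gauge $\eta$; to apply Theorem \ref{thm2} literally you must either produce some homeomorphism $\eta'$ realizing the quasi-symmetry inequality, or note that the proof of Theorem \ref{thm2} uses quasi-symmetry of $\phi$ and $\psi$ only through Lemma \ref{pseudodistance}, i.e.\ through an estimate of the form $d(p,\phi(\pi(q)))\le \ell\, r$ for $q\in B(p,r)$, and check that the constant $H$ plays the role of $\ell_\eta$ there directly.

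In step (ii) your proposed localization does not mesh with how $\ell_\eta$ is actually used. The inclusion \eqref{inclusionepalle} of Lemma \ref{pseudodistance} requires a \emph{single} constant valid for all $p\in\phi(Y)$ and all $r>0$ simultaneously, and that same constant reappears as $\ell_\eta^{\pm Q}$ in the final Ahlfors constants $c_1C^{-1}\ell_\eta^{-Q}$ and $c_2C\ell_\eta^{Q}$; a bound that holds only ``on small balls'' or only in the limit $x\to x'$ cannot be substituted there without further argument. The uniform constant you need is again $H$ from Definition \ref{def_conformal}, not a quantity extracted from the infinitesimal $\limsup$. To be fair, this difficulty is inherited from the paper, whose own justification is no more than an assertion; but as written your bridge from the local $\limsup$ to the global supremum $\ell_\eta$ would fail, and the argument should be rerouted through the uniform bound $<H$ instead.
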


 {\bf Conflict of interest.}  On behalf of all authors, the corresponding author states that there is no conflict of interest.

 \bibliographystyle{alpha}
\bibliography{DDLD}

\newcommand{\etalchar}[1]{$^{#1}$}
\begin{thebibliography}{FSSC03b}

\bibitem[ABB19]{ABB}
Andrei Agrachev, Davide Barilari, and Ugo Boscain.
\newblock A comprehensive introduction to sub-{R}iemannian geometry.
\newblock {\em Cambridge Studies in Advanced Mathematics, Cambridge Univ.
  Press}, 181:762, 2019.

\bibitem[Ahl06]{A06}
L.V. Ahlfors.
\newblock {Lectures on quasiconformal mappings, {U}niversity {L}ecture
  {S}eries}.
\newblock {\em Amer. Math. Soc., Providence, RI}, 38, 2006.

\bibitem[AK00]{AmbrosioKirchheimRect}
Luigi Ambrosio and Bernd Kirchheim.
\newblock Rectifiable sets in metric and {B}anach spaces.
\newblock {\em Math. Ann.}, 318(3):527--555, 2000.

\bibitem[BJL{\etalchar{+}}99]{MR1736929}
S.~Bates, W.~B. Johnson, J.~Lindenstrauss, D.~Preiss, and G.~Schechtman.
\newblock Affine approximation of {L}ipschitz functions and nonlinear
  quotients.
\newblock {\em Geom. Funct. Anal.}, 9(6):1092--1127, 1999.

\bibitem[BLU07]{BLU}
A.~Bonfiglioli, E.~Lanconelli, and F.~Uguzzoni.
\newblock {\em Stratified {L}ie groups and potential theory for their
  sub-{L}aplacians}.
\newblock Springer Monographs in Mathematics. Springer, Berlin, 2007.

\bibitem[CDPT07]{CDPT}
Luca Capogna, Donatella Danielli, Scott~D. Pauls, and Jeremy~T. Tyson.
\newblock {\em An introduction to the {H}eisenberg group and the
  sub-{R}iemannian isoperimetric problem}, volume 259 of {\em Progress in
  Mathematics}.
\newblock Birkh\"{a}user Verlag, Basel, 2007.

\bibitem[Che99]{C99}
J.~Cheeger.
\newblock Differentiability of {L}ipschitz functions on metric measure spaces.
\newblock {\em Geom. Funct. Anal. 9}, pages 428--517, 1999.

\bibitem[DD22a]{D22.1}
Daniela Di~Donato.
\newblock Intrinsically {H}\"older sections in metric spaces.
\newblock {\em preprint}, 2022.

\bibitem[DD22b]{D22.2}
Daniela Di~Donato.
\newblock Intrinsically quasi-isometric sections in metric spaces.
\newblock {\em preprint}, 2022.

\bibitem[DDLD22]{DDLD21}
Daniela Di~Donato and Enrico Le~Donne.
\newblock Intrinsically lipschitz sections and applications to metric groups.
\newblock {\em preprint}, 2022.

\bibitem[Fed69]{Federer}
Herbert Federer.
\newblock {\em Geometric measure theory}.
\newblock Die Grundlehren der mathematischen Wissenschaften, Band 153.
  Springer-Verlag New York Inc., New York, 1969.

\bibitem[FSSC01]{FSSC}
B.~Franchi, R.~Serapioni, and F.~Serra~Cassano.
\newblock Rectifiability and perimeter in the {H}eisenberg group.
\newblock {\em Math. Ann.}, 321(3):479--531, 2001.

\bibitem[FSSC03a]{MR2032504}
B.~Franchi, R.~Serapioni, and F.~Serra~Cassano.
\newblock Regular hypersurfaces, intrinsic perimeter and implicit function
  theorem in {C}arnot groups.
\newblock {\em Comm. Anal. Geom.}, 11(5):909--944, 2003.

\bibitem[FSSC03b]{FSSC03}
Bruno Franchi, Raul Serapioni, and Francesco Serra~Cassano.
\newblock On the structure of finite perimeter sets in step 2 {C}arnot groups.
\newblock {\em The Journal of Geometric Analysis}, 13(3):421--466, 2003.

\bibitem[Hei01]{H01}
J.~Heinonen.
\newblock {Lectures on {A}nalysis on {M}etric {S}paces, {S}pringer-{V}erlag}.
\newblock 2001.

\bibitem[HK98]{10.1007/BF02392747}
J.~Heinonen and P.~Koskela.
\newblock {Quasiconformal maps in metric spaces with controlled geometry}.
\newblock {\em Acta Mathematica}, 181(1):1 -- 61, 1998.

\bibitem[IM01]{IM01}
T.~Iwaniec and G.~Martin.
\newblock {Geometric function theory and non-linear analysis}.
\newblock {\em Oxford Mathematical Monographs, Clarendon Press, Oxford
  University Press}, 2001.

\bibitem[Kei04]{K04}
S.~Keith.
\newblock A differentiable structure for metric measure spaces.
\newblock {\em Adv. Math. 183}, pages 271--315, 2004.

\bibitem[KM16]{KM16}
B.~Kleiner and J.M. Mackay.
\newblock Differentiable structures on metric measure spaces: a primer.
\newblock {\em Ann. Sc. Norm. Super. Pisa Cl. Sci. (5) Vol. XVI}, pages 41--64,
  2016.

\bibitem[Pan16]{P21}
P.~Pansu.
\newblock On the quasisymmetric {H}\"older equivalence problem for {C}arnot
  groups.
\newblock {\em Annales de la facolt\'e des sciences de Toulouse
  Math\'ematiques}, 2016.

\bibitem[TV80]{TV}
P.~Tukia and J.~Va\"is\"al\"a.
\newblock Quasisymmetric embeddings of metric spaces.
\newblock {\em Ann. Acad. Sci. Fenn. Ser. A I Math. 5}, pages 97--114, 1980.

\bibitem[VN88]{Berestovski}
Berestovskii Valerii~Nikolaevich.
\newblock Homogeneous manifolds with intrinsic metric.
\newblock {\em Sib Math J}, I(29):887--897, 1988.

\end{thebibliography}

\end{document}